\documentclass{article}
\usepackage[utf8]{inputenc}
\usepackage{authblk}
\usepackage{amsthm}
\usepackage{amssymb}
\usepackage{amsmath}
\usepackage{biblatex}
\usepackage{amsfonts}
\usepackage{mathtools}
\usepackage{tikz}
\usepackage{authblk}
\usepackage{mathrsfs}

\usetikzlibrary{shadows}
\usetikzlibrary{arrows}
\usetikzlibrary{decorations.markings}
\theoremstyle{plain}
\newtheorem{theorem}{Theorem}[section]
\newtheorem{lemma}[theorem]{Lemma}

\theoremstyle{definition}
\newtheorem{cor}{Corollary}[theorem]

\title{Counting Isomorphism Classes of Spanning Trees of Complete Bipartite Graphs}

\author[a]{Peter Johnson\\Auburn University\\ johnspd@auburn.edu \and Shayne Nochumson\\Auburn University\\ snk0018@auburn.edu}

\usepackage[colorlinks=true, citecolor=black, linkcolor=black]{hyperref}

\begin{document}

\maketitle

\footnotetext[1]{Key words and phrases: complete bipartite graph, spanning tree, partition}
\footnotetext[2]{AMS (MOS) subject classification: 05C40, 05C60, 05C75}

\noindent

\begin{abstract}
    Spanning trees of complete bipartite graphs exhibit a rich interaction between degree sequences and graph structure.  In this paper, we obtain lower bounds on the number of isomorphism classes of spanning trees in $K_{a,b}, 2 \leq a \leq b$ in terms of $P_a(a+b-1)$ and $P_b(a+b-1)$ where $P_k(m)$ is the number of integer partitions of $m$ of length $k$. Furthermore, we obtain an upper bound in terms of $a$ and $b$.
\end{abstract}

\section{Introduction}
For a given graph $G$, we denote by $\tau(G)$ the number of labeled spanning trees of $G$.  $\tau(G)$ has been widely studied with classical results such as Kirchoff's Matrix-Tree Theorem \cite{KirchhoffUeberDA} from 1847 and Cayley's formula \cite{borchardt1861} proven by Borchardt in 1861. In 1962, Scoins proved that $\tau(K_{a,b})=a^{b-1}\cdot b^{a-1}$ \cite{Scoins_1962}.  The following year, Glicksman gave an alternate proof for Scoins' formula \cite{Glicksman}.  We consider a question related to that of Scoins.  How many isomorphism classes of spanning trees are there of $K_{a,b}$?  We denote this value by $I_{a,b}$ and prove a lower bound by considering integer partitions of $a+b-1$. We then prove a upper bound on $I_{a,b}$ which can be calculated using only $a$ and $b$.

\section{Main Results}
\begin{lemma} Let $2 \leq a \leq b$ and $a+b=n$. For any pair of integer partitions $s_1 \geq ... \geq s_a >0$ and $t_1 \geq ... \geq t_b >0$ of $a+b-1$, there is a connected bipartite graph $Q = Q((s_i),(t_j))$ with bipartitions $A,B$ where $|A|=a, |B|=b$ and where the $s_i$ are the degrees in $Q$ of vertices in $A$ and the $t_j$ are the degrees in $Q$ of the vertices in $B$.
\end{lemma}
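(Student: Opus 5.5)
Since the sum of the $s_i$ is $a+b-1$, any bipartite graph with these degrees has exactly $a+b-1 = n-1$ edges. So a connected such graph on $n$ vertices is automatically a tree. The plan is therefore to construct a spanning tree of $K_{a,b}$ with the prescribed degrees, by induction on $n=a+b$. We relax the hypothesis to $1 \le a$, $1\le b$, which gives a convenient base case. If $a=1$, the single vertex of $A$ has degree $s_1 = b$ and every $t_j$ equals $1$, since $b$ positive parts sum to $b$; the star $K_{1,b}$ works. The case $b=1$ is symmetric.

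\textbf{Inductive step.} Assume $a,b\ge 2$.
\begin{itemize}
\item First we find a leaf. Since $\sum s_i + \sum t_j = 2(n-1) < 2n$, some part equals $1$. Suppose, by symmetry, that $t_b = 1$; the case where some $s_i=1$ is handled identically with the roles of $A$ and $B$ swapped.
\item Next we choose where to attach it. Since $\sum_i s_i = a+b-1 > a$ when $b\ge 2$, some $s_i \ge 2$; in particular $s_1\ge 2$.
\item We then remove the leaf: replace $s_1$ by $s_1-1$ and delete $t_b$. The new sequences $(s_1-1,s_2,\dots,s_a)$ and $(t_1,\dots,t_{b-1})$ consist of positive integers. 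Each sums to $a+(b-1)-1$, so (after re-sorting the first one) they are partitions of the correct length for the smaller pair $(a,b-1)$.
\item By induction there is a tree $Q'$ on $A \cup B'$ with $|B'|=b-1$ realizing these degrees.
\item Finally, add a new vertex to $B$ joined to the vertex of $A$ whose degree in $Q'$ is $s_1-1$. The result is connected, bipartite with parts of sizes $a$ and $b$, and has exactly the degrees $(s_i)$ on $A$ and $(t_j)$ on $B$.
\end{itemize}

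This construction yields a connected bipartite graph, in fact a spanning tree of $K_{a,b}$, as required. The restriction $2\le a\le b$ in the statement is then simply a special case.

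The only delicate point is making the induction legitimate. The smaller instance may have $b-1 < a$, which is why we allow arbitrary $a,b\ge1$ and drop the ordering. We must also check that when we remove a leaf on one side, the other side has a part at least $2$ to absorb the removal without creating a zero part. That is the counting argument $a+b-1 > a$ above, which needs exactly $b\ge 2$; the case $b=1$ is the base case.
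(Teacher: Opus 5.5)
Your proof is correct and uses the same leaf-stripping induction as the paper: delete a degree-$1$ vertex of $B$, lower $s_1\ge 2$ by one, apply induction on $n$, and reattach the leaf to the vertex of degree $s_1-1$. Your two changes, relaxing to all $a,b\ge 1$ with star base cases and finding a leaf from the degree sum $2n-2<2n$ instead of from $a\le b$, improve on the paper's version: when $a=b$, the paper applies its induction hypothesis to the pair $(a,a-1)$, which lies outside the lemma's stated range $a\le b$ and needs the unstated $A\leftrightarrow B$ symmetry, whereas your induction covers that case directly.
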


\begin{proof} 

 We will proceed by induction on $a+b=n$.  The smallest allowable $n$ is $n=2+2=4$.  The only partition of length $2$ of $n-1=3$ is $3=2+1$.

\begin{figure}[h]
    \centering
    
\begin{tikzpicture}[
           vs/.style={circle,draw=black,text=black},
           es/.style={thick, black},edge_dash/.style={ultra thick, black,dashed},scale=.5]
                      
            \node[vs,scale=.8,fill=white,  label={left:$1$}] (1) at (0,0){};
            \node[vs,scale=.8,fill=white, label={right:$1$}] (2) at (2,0){};
            \node[vs,scale=.8,fill=white, label={right:$2$}] (3) at (2,2){};
            \node[vs,scale=.8,fill=white, label={left:$2$}] (4) at (0,2){}; 
             
            \draw[es] (1) -- (3);
            \draw[es] (2) --(4);
            \draw[es](3)-- (4);
            
        \end{tikzpicture}   
    
    \caption{$Q((2,1),(2,1))$}
    \label{}
\end{figure}
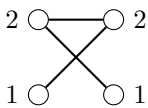

Suppose that $n=a+b>4$; and we have integers $s_1 \geq ... \geq s_a >0$ and $t_1 \geq ... \geq t_b >0$ where $\displaystyle \sum_{i=1}^{a} s_i = \sum_{j=1}^{b} t_j = a+b-1$. If $t_b>1$ then $t_1+...+t_b \geq 2b > a+b-1$.  Therefore, $t_b=1$.  On the other hand, $a< a+b-1 = s_1+...+s_a \leq as_1$, so $s_1 \geq 2$. 

\begin{center}
    Let $s_i' = \begin{cases} 
  s_i -1, & \text{if } i=1 \\
  s_i, & \text{otherwise} 
\end{cases}$
\end{center}

Then $s_i',...,s_a'$ are positive integers summing to $a+(b-1)-1=t_1+...+t_{b-1}$.  By the induction hypotheses, there is a connected bipartite graph $Q'((s_1',...,s_a'),(t_1,...,t_{b-1}))$ with bipartition $A,B'$ where $|A|=a, |B'| = b-1$ and where the $s_i'$ are the $Q'$ degrees of the vertices in $A$ and $t_1,...,t_{b-1}$ are the degrees of $B'$.  Let $u \in A$ be a vertex with degree $s_1 -1$ in $Q'$, let $v \not\in A \cup B'$, and let $B= B' \cup \{v\}.$  Let $Q= Q' \cup uv$.  Then $Q = Q((s_i),(t_j))$ is a connected bipartite graph with bipartition $A,B$ with the vertices in $A$ having degrees $s_1,...,s_a$ and the vertices in $B$ having degrees $t_1,...,t_b$. 

\end{proof}

\begin{lemma} A tree is a spanning tree of $K_{a,b}$ if and only if it is isomorphic to some such $Q$.

\end{lemma}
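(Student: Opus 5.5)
Both directions come down to an edge count plus connectivity. The statement reads: a tree $T$ is a spanning tree of $K_{a,b}$ if and only if $T \cong Q((s_i),(t_j))$ for some pair of partitions as in the previous lemma. Here the parts $(s_i)$ have length $a$, the parts $(t_j)$ have length $b$, and each partition sums to $a+b-1$.

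For the forward direction, let $T$ be a spanning tree of $K_{a,b}$ with parts $A$, $B$, $|A|=a$, $|B|=b$. Then $T$ has $a+b$ vertices and therefore $a+b-1$ edges. Every edge of $T$ is an edge of $K_{a,b}$, so it has exactly one endpoint in $A$ and one in $B$. This gives
\[
\sum_{v\in A}\deg_T(v)=\sum_{w\in B}\deg_T(w)=a+b-1 .
\]
Since $T$ is connected on at least $2$ vertices, every degree is positive. Listing the degrees of $A$ in nonincreasing order gives a partition $(s_i)$ of $a+b-1$ of length $a$, and listing those of $B$ gives $(t_j)$ of length $b$. Then $T$ itself is a connected bipartite graph with bipartition $A,B$ realizing these degrees. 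So $T$ is (isomorphic to) a graph $Q((s_i),(t_j))$, read as any connected bipartite graph with those bipartition sizes and degrees. I will remark that $Q$ in the previous lemma is not unique, so the statement should be read as ``some graph of the type $Q$''. It is worth making this explicit.

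For the converse, take any $Q=Q((s_i),(t_j))$ as produced by the previous lemma. Its bipartition $A,B$ has $|A|=a$ and $|B|=b$, so it has $a+b$ vertices. Since every edge joins $A$ to $B$, the number of edges equals $\sum s_i = a+b-1$. A connected graph on $n$ vertices with $n-1$ edges is a tree, so $Q$ is a tree. Because $Q$ is bipartite with parts of sizes $a$ and $b$, identifying $A$ and $B$ with the two sides of $K_{a,b}$ embeds $Q$ as a subgraph of $K_{a,b}$ on all $a+b$ vertices. Hence $Q$, and any tree isomorphic to it, is a spanning tree of $K_{a,b}$.

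The main subtlety is not computational. It is the interpretation of ``such $Q$'': $Q$ must be understood as any connected bipartite realization of the pair of partitions, not only the specific graph built inductively in the previous lemma. Under the narrower reading the forward direction fails, because different trees can share the same degree partitions while the induction produces only one of them. I will state this interpretation at the start of the proof. Everything else is the standard fact that a connected graph with $n-1$ edges is a tree.
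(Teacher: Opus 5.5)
Your proof is correct and follows essentially the same route as the paper. The forward direction reads off the two partitions from the fact that each of the $a+b-1$ edges has exactly one end in $A$ and one in $B$; the converse combines connectivity of $Q$ with the edge count $\sum s_i = a+b-1$ to conclude that $Q$ is a tree spanning $K_{a,b}$.

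Your extra care (reading ``such $Q$'' as any connected bipartite realization of the degree pair, and noting that all degrees are positive) makes explicit what the paper's one-line step ``$T$ is isomorphic to some $Q((s_i),(t_j))$'' silently assumes. Your warning about the narrow reading is also justified: every graph built by the construction in Lemma 2.1 has a $B$-leaf adjacent to an $A$-vertex of maximum degree, and not every spanning tree of $K_{a,b}$ has one.
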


\begin{proof}
Suppose $T$ is a spanning tree of $K_{a,b}.$  Since $T$ spans $K_{a,b}$, $|V(T)|=a+b$ and $|E(T)|=a+b-1$. Let $A,B$ denote the partite sets of $K_{a,b}$ where $|A|=a$ and $|B|=b$. As $K_{a,b}$ is bipartite, $uv \in E(T)$ if and only if $u$ and $v$ lie in different partite sets. Then the degrees of vertices in $A$ must be an integer partition of $a+b-1$. Similarly for the degrees of $B$.  Thus, $T$ is isomorphic to some $Q((s_i),(t_j))$ where $s_1 \geq ... \geq s_a >0$ and $t_1 \geq ... \geq t_b>0$ are integer partitions of $a+b-1$. \\

Next, consider some $Q((s_i),(t_j))$ for some pair of integer partitions of $a+b-1$, $s_1 \geq ... \geq s_a>0$ and $t_1 \geq ... \geq t_b$.  By Lemma 2.1, $Q((s_i),(t_j))$ is a connected bipartite graph. Since $|E(Q((s_i),(t_j)))|=a+b-1$, $Q((s_i),(t_j))$ must be a tree.  As $Q((s_i),(t_j))$ is a tree with bipartite sets of sizes $a$ and $b$, $Q((s_i),(t_j))$ must be a spanning tree of $K_{a,b}$. 

\end{proof}

 Let $I_{a,b}$ denote the number of isomorphism classes of spanning trees of $K_{a,b}$, and let $P_k(m)$ denote the number of integer partitions of $m$ of length $k$.

 \begin{theorem} If $b>a\geq2$, $I_{a,b} \geq P_a(a+b-1) \cdot P_b(a+b-1)$.
 \end{theorem}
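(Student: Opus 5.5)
The plan is to build an injection from pairs of partitions into isomorphism classes of spanning trees of $K_{a,b}$. Let $\mathcal{S}$ be the set of partitions of $a+b-1$ of length $a$, and $\mathcal{T}$ the set of partitions of $a+b-1$ of length $b$, so that $|\mathcal{S}\times\mathcal{T}| = P_a(a+b-1)\cdot P_b(a+b-1)$. For each pair $((s_i),(t_j))\in\mathcal{S}\times\mathcal{T}$, Lemma 2.1 provides a connected bipartite graph $Q((s_i),(t_j))$. By Lemma 2.2, this graph is a spanning tree of $K_{a,b}$. We map the pair to the isomorphism class of that tree. It then suffices to show that distinct pairs are sent to distinct isomorphism classes.

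To prove injectivity, suppose $Q((s_i),(t_j)) \cong Q((s_i'),(t_j'))$ via an isomorphism $\varphi$. The key observation is that a tree is connected, so its bipartition is unique up to swapping the two classes. Hence $\varphi$ either sends $A$ to $A'$ and $B$ to $B'$, or sends $A$ to $B'$ and $B$ to $A'$. Since $b>a$, the classes have different sizes $a\neq b$, and $\varphi$ is a bijection preserving the bipartition. So $\varphi$ must map the size-$a$ class onto the size-$a$ class and the size-$b$ class onto the size-$b$ class.

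Because an isomorphism preserves degrees, the multiset of degrees on the $a$-side agrees in the two trees, that is $(s_i)=(s_i')$ as partitions. Likewise $(t_j)=(t_j')$. Thus the map is injective, and $I_{a,b}\ge P_a(a+b-1)P_b(a+b-1)$.

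The main point needing care is the uniqueness of the bipartition of a connected bipartite graph. This is standard: fixing one vertex, the parity of its distance to every other vertex determines that vertex's side. The hypothesis $b>a$ is also essential. When $a=b$, the pairs $((s),(t))$ and $((t),(s))$ can give isomorphic trees, so the bound would fail as stated.
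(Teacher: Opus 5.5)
Your proof is correct and takes the same route as the paper: you realize each ordered pair of partitions as a spanning tree via Lemmas 2.1 and 2.2, then use $b>a$ to show that distinct pairs give non-isomorphic trees. Your explicit argument is what justifies the paper's terse ``because $b>a$'': a connected bipartite graph has a unique bipartition, so when $a\neq b$ every isomorphism carries the $a$-side onto the $a$-side. This step is genuinely needed, since the whole-tree degree multisets of two distinct pairs can coincide, e.g.\ $((2,2,2),(3,1,1,1))$ and $((3,2,1),(2,2,1,1))$ for $a=3$, $b=4$.
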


\begin{proof}
 Consider two distinct pairs of integer partitions $((s_1, ... ,s_a),(t_1,...t_b))$ and \\ $((s_1', ... ,s_a'),(t_1',...t_b'))$.  By Lemma 2.1, there is a connected bipartite graph $Q$ $(Q'$    resp.) with bipartitions $A,B$ $(A',B'$ resp.) of sizes $a$ and $b$. By Lemma 2.2, $Q$ and $Q'$ are each a spanning tree of $K_{a,b}$. Since $((s_i),(t_j))$ and $((s_i'),(t_j'))$ differ, the degree sequences of $Q$ and $Q'$ differ, so $Q$ and $Q'$ cannot be isomorphic trees because $b>a$. Then, any ordered pair of integer partitions generates at least one isomorphism class of spanning trees of $K_{a,b}$, and spanning trees corresponding to different pairs are not isomorphic. Thus, $I_{a,b} \geq P_a(a+b-1) \cdot P_b(a+b-1)$. 

 \end{proof}

\begin{cor} If $2 \leq a \leq b$, then $a^{b-1}\cdot b^{a-1} \geq P_a(a+b-1)\cdot P_b(a+b-1)$.
\end{cor}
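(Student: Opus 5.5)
We will obtain the Corollary by comparing the lower bound on spanning trees of $K_{a,b}$ coming from Lemmas 2.1 and 2.2 with Scoins' formula $\tau(K_{a,b}) = a^{b-1}\cdot b^{a-1}$. The inequality then follows by showing that the number of \emph{labeled} spanning trees of $K_{a,b}$ is at least $P_a(a+b-1)\cdot P_b(a+b-1)$.

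For $b>a$ this is immediate. Every isomorphism class of spanning trees contains at least one labeled spanning tree, so $\tau(K_{a,b}) \geq I_{a,b}$. Combining this with the Theorem gives
\[
a^{b-1} b^{a-1} = \tau(K_{a,b}) \geq I_{a,b} \geq P_a(a+b-1)\cdot P_b(a+b-1).
\]

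The main obstacle is the case $a=b$. Here the Theorem does not apply, because swapping the two sides can make trees from different pairs of partitions isomorphic. To avoid this, I would bypass isomorphism classes and build an injection directly into labeled spanning trees.

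Fix the partite sets $A,B$ of $K_{a,b}$, with $|A|=a$ and $|B|=b$. For each ordered pair of partitions $((s_i),(t_j))$ of $a+b-1$, of lengths $a$ and $b$, Lemma 2.1 gives a connected bipartite graph $Q((s_i),(t_j))$ with parts of sizes $a$ and $b$. By Lemma 2.2 it is a tree. Choose a bijection sending its $a$-side onto $A$ and its $b$-side onto $B$. This gives a labeled spanning tree $T$ of $K_{a,b}$ with the following property: the multiset of degrees of $T$ on the fixed labeled set $A$ is $\{s_i\}$, and on $B$ it is $\{t_j\}$.

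Distinct ordered pairs therefore yield labeled trees that differ in the degree multiset on $A$ or on $B$. Hence these labeled trees are distinct, and the map is injective. This argument works uniformly for all $2\le a\le b$, including $a=b$, and gives
\[
a^{b-1}\cdot b^{a-1} = \tau(K_{a,b}) \geq P_a(a+b-1)\cdot P_b(a+b-1).
\]
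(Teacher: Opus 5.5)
Your proof is correct, and it takes a genuinely different route from the paper's. The paper gives a one-line derivation, calling the corollary a ``direct consequence of Scoins' formula and Theorem 2.3.'' That is the chain $a^{b-1}b^{a-1}=\tau(K_{a,b})\ge I_{a,b}\ge P_a(a+b-1)\,P_b(a+b-1)$, which is exactly your first paragraph.

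Theorem 2.3, however, is only proved for $b>a$, so this chain does not cover the case $a=b$ that the corollary's hypothesis includes. The paper's own tool for $a=b$ is Theorem 2.4, and it yields only $a^{2a-2}\ge r(r+1)/2$ with $r=P_a(2a-1)$ (Corollary 2.4.1). That is weaker than the claimed $r^2$.

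Your argument avoids this. You inject ordered pairs of partitions directly into \emph{labeled} spanning trees, and you tell the images apart by their degree multisets on the fixed labeled sides $A$ and $B$. This sidesteps isomorphism entirely and works uniformly for all $2\le a\le b$. So you actually establish the $a=b$ case, which the paper's cited argument leaves unjustified. It also makes your separate treatment of $b>a$ via $I_{a,b}$ redundant.

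What the paper's route buys is the stronger intermediate statement $I_{a,b}\ge P_aP_b$ about isomorphism classes when $b>a$. For the corollary itself, which is a statement about labeled trees, your argument is the cleaner and more complete one.
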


\noindent Corollary 2.3.1 is a direct consequence of Scoins' formula and Theorem 2.3.

\begin{theorem}If $b=a\geq2$, then $I_{a,a} \geq \dfrac{r(r+1)}{2}$ where $r=P_a(2a-1)$.
\end{theorem}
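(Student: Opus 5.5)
The plan is to mirror the proof of Theorem 2.3, but to account for the extra symmetry when $a=b$: swapping the two partite sets. The count $\frac{r(r+1)}{2}$ is exactly the number of unordered pairs $\{(s_i),(t_j)\}$ of partitions of $2a-1$ of length $a$ (repetition allowed). So the goal is an injection from unordered pairs of such partitions into isomorphism classes of spanning trees of $K_{a,a}$.

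\textbf{Step 1: the map.} For each unordered pair $\{\sigma,\pi\}$ of partitions of $2a-1$ of length $a$, fix an ordering $(\sigma,\pi)$. Lemma 2.1 gives a connected bipartite graph $Q(\sigma,\pi)$ with parts $A,B$, $|A|=|B|=a$, in which the degrees on $A$ are $\sigma$ and the degrees on $B$ are $\pi$. By Lemma 2.2 this graph is a spanning tree of $K_{a,a}$. Send $\{\sigma,\pi\}$ to the isomorphism class of this tree. Choosing the other ordering only relabels which side is called $A$, so that choice does not matter for the argument below.

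\textbf{Step 2: the key fact.} A tree is connected, so its bipartition is unique up to swapping the two sides. Any isomorphism $\varphi\colon Q\to Q'$ between trees preserves adjacency, hence preserves parity of distance. Therefore $\varphi$ maps the bipartition $\{A,B\}$ of $Q$ onto the bipartition $\{A',B'\}$ of $Q'$: either $\varphi(A)=A'$ and $\varphi(B)=B'$, or $\varphi(A)=B'$ and $\varphi(B)=A'$.

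\textbf{Step 3: degrees.} Since $\varphi$ also preserves degrees, the multiset of degrees on $A$ equals the multiset on $\varphi(A)$, and likewise for $B$. Hence if $Q(\sigma,\pi)\cong Q(\sigma',\pi')$, then $\{\sigma,\pi\}=\{\sigma',\pi'\}$ as unordered pairs. So distinct unordered pairs give non-isomorphic spanning trees.

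\textbf{Step 4: counting.} The number of unordered pairs with repetition from $r$ objects is $\binom{r}{2}+r=\frac{r(r+1)}{2}$. This gives $I_{a,a}\ge \frac{r(r+1)}{2}$.

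The main obstacle is Step 2. Theorem 2.3 used $b>a$ to rule out a swap of the sides; here the swap can genuinely occur, and that is why we count unordered rather than ordered pairs. What must be checked is that no other mixing of the sides can happen. The parity argument (or uniqueness of the 2-coloring of a connected bipartite graph) settles this.
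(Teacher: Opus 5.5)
Your proof is correct and follows the same route as the paper. Like the paper, you realize each pair of length-$a$ partitions of $2a-1$ as a spanning tree via Lemmas 2.1 and 2.2, identify $(s,t)$ with $(t,s)$ because the two sides can be swapped, and count the $r+\binom{r}{2}=\frac{r(r+1)}{2}$ unordered pairs. Your Steps 2--3 go further than the paper: they show that an isomorphism of trees carries the bipartition onto the bipartition, so the unordered pair of side-degree multisets is an isomorphism invariant. That is the injectivity the paper's proof only asserts implicitly (read literally, the paper's sentence that the number of classes is ``bounded by'' the number of unordered pairs states the opposite inequality), so your version is the more complete one.
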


\begin{proof}

Let $K_{a,b}$ have bipartitions $A,B$ such that $|A|=|B|=a$, let $P=\{p_1,...,p_r\}$ denote the set of integer partitions of $2a-1$ of length $a$, and let $P'=\{p_1',...,p_r'\}$ where $p_i'=p_i$.  For any pair $(s,t)$ where $s \in P$ and $t \in P'$, there exists at least one spanning tree $Q$ of $K_{a,a}$ by Lemmas 2.1 and 2.2. Consider some $(s,t)=(p_k,p_l')$ and $(s',t')=(p_l,p_k')$.  As $|A|=|B|=a$ and $K_{a,b}$ is unlabeled, any spanning tree $Q$ generated by $(s,t)$ must be isomorphic to some spanning tree $Q'$ generated by $(s',t')$.  Then, the number of isomorphism classes of spanning trees of $K_{a,b}$ where $a=b$ must be bounded by the number of unordered pairs $s'',t''$ where $s'' \in P$ and $t'' \in P'$.  There are $r$ pairs such that $s''=t''$ and $\displaystyle { r \choose 2 }$ pairs where $s''\not=t''$.  Thus, there are $r+\dfrac{r(r-1)}{2}=\dfrac{2r+r^2-r}{2}=\dfrac{r(r+1)}{2}$ unique unordered pairs of integer partitions of $2a-1$ where each is of length $a$.  Since each pair must generate at least one isomorphism class, $I_{a,a} \geq \dfrac{r(r+1)}{2}$. 

\end{proof}

\begin{cor}
    If $a \geq 2$, then $a^{2a-2} \geq \dfrac{P_a(2a-1)\cdot (P_a(2a-1)+1)}{2}$.
\end{cor}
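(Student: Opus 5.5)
Corollary 2.4.1 follows the same pattern as Corollary 2.3.1. The inequality $I_{a,a} \geq \frac{r(r+1)}{2}$ of Theorem 2.4 will be sandwiched under the number of \emph{labeled} spanning trees of $K_{a,a}$, which Scoins' formula gives. Put $b=a$ in $\tau(K_{a,b}) = a^{b-1}b^{a-1}$ to get
\[
\tau(K_{a,a}) = a^{a-1}\cdot a^{a-1} = a^{2a-2}.
\]

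The key step is the inequality $\tau(K_{a,a}) \geq I_{a,a}$. Fix the vertex set of $K_{a,a}$, so each labeled spanning tree is a specific edge subset. Every isomorphism class of spanning trees of $K_{a,a}$ is nonempty by definition, since it contains at least one actual spanning tree of the fixed labeled graph. Distinct classes are disjoint, so choosing one labeled representative per class gives an injection from isomorphism classes into labeled spanning trees. Hence $I_{a,a} \leq \tau(K_{a,a})$.

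Chaining the two facts with Theorem 2.4, where $r = P_a(2a-1)$, gives
\[
a^{2a-2} = \tau(K_{a,a}) \geq I_{a,a} \geq \frac{P_a(2a-1)\,\bigl(P_a(2a-1)+1\bigr)}{2},
\]
which is the claim for every $a \geq 2$.

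There is no real obstacle here. The only point needing care is that the comparison $\tau \geq I$ uses isomorphism classes of trees realized inside the fixed $K_{a,a}$. This is exactly how $I_{a,b}$ was defined, so the injection argument applies directly.
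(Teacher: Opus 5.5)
Your proof is correct and follows the paper's intended route. The paper gives no separate proof of Corollary 2.4.1, but by analogy with Corollary 2.3.1 it is a direct consequence of Scoins' formula at $b=a$ (giving $\tau(K_{a,a})=a^{2a-2}$), the bound $I_{a,a}\le\tau(K_{a,a})$, and Theorem 2.4; your injection argument just spells out that middle step, which the paper leaves implicit.
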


\begin{lemma}
    If $2 \leq a < b$, $I_{a,b} \leq I_{a,a} \cdot a^{b-a}$.
\end{lemma}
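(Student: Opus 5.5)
The plan is to prove the stronger one-step inequality
\[
I_{a,c+1} \leq a\cdot I_{a,c}\qquad\text{for every } c\geq a,
\]
and then iterate it $b-a$ times starting from $c=a$, which gives $I_{a,b}\leq I_{a,a}\cdot a^{b-a}$. The mechanism is leaf deletion. Let $T$ be a spanning tree of $K_{a,c+1}$ with parts $A,B$, where $|A|=a$ and $|B|=c+1$. As in the proof of Lemma 2.1, the degrees of the vertices of $B$ form a partition of $a+c$ into $c+1$ positive parts. Since $2(c+1)>a+c$, some vertex $v\in B$ is a leaf. Then $T-v$ is a tree with parts $A$ and $B\setminus\{v\}$, of sizes $a$ and $c$, so by Lemma 2.2 it is a spanning tree of $K_{a,c}$. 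Thus $T$ is obtained from some spanning tree $T'$ of $K_{a,c}$ by attaching a new pendant vertex to one of the $a$ vertices of its $a$-side.

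For each isomorphism class of spanning trees of $K_{a,c}$, I fix one representative $T'$ together with a bipartition $(A',B')$ with $|A'|=a$. Every class of spanning trees of $K_{a,c+1}$ then contains a tree of the form $T'+uv$, where $u\in A'$ and $v$ is new. Isomorphic copies of $T'$ yield isomorphic extended trees, so each class of $T'$ contributes at most $a$ classes, one for each choice of $u$. When $c>a$, the $a$-side of $T'$ is determined, since it is the smaller part, and the inequality $I_{a,c+1}\leq a\, I_{a,c}$ follows directly.

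The main obstacle is the base step $c=a$. There a tree $T'$ in $K_{a,a}$ has two parts of size $a$, and attaching a leaf on either side produces a spanning tree of $K_{a,a+1}$. The naive count therefore gives $2a$ choices rather than $a$. My first approach is to split the classes of $T'$ into two kinds.

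\emph{Parts interchanged by an automorphism.} If some automorphism of $T'$ swaps the two parts, then the $2a$ attachment points fall into at most $a$ orbits, so the bound of $a$ holds.

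\emph{Parts not interchanged.} Otherwise I try a charging argument. Every tree of $K_{a,a+1}$ has at least two leaves in $B$, because $k$ leaves force $2a\geq k+2(a+1-k)$, hence $k\geq 2$. I would choose the leaf to delete canonically, for example one whose neighbor has maximal degree. The aim is to show that the resulting map lands only on pairs (class of $T'$, designated side) in a way that uses at most $a$ attachment points per class overall.

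If this charging argument fails, the fallback is to prove $I_{a,a+1}\leq a\, I_{a,a}$ directly by counting. Using the partition viewpoint of Lemmas 2.1 and 2.2, I would compare the degree-pair data of the two families of trees and bound the number of trees sharing the same pair after deleting the chosen leaf. I expect this base step to be the only delicate part of the proof. Once it is settled, the induction on $c$ from $a$ up to $b-1$ is routine.
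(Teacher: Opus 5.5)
There is a genuine gap: the base step $I_{a,a+1}\le a\,I_{a,a}$ is never proved. Your reduction is otherwise the paper's own: its proof inducts on $a+b$ and bounds $I_{a,b}\le a\cdot I_{a,b-1}$ by attaching a new vertex $v$ to one of the $a$ vertices of $A$ in a spanning tree of $K_{a,b-1}$. Your leaf count ($2(c+1)>a+c$) supplies the surjectivity the paper leaves implicit, and your step for $c>a$ is correct. You have also found the weak point correctly. For $a\ge 3$ the paper applies its inductive step with $b-1=a$ without comment, tacitly treating the $a$-side of a spanning tree of $K_{a,a}$ as determined by its isomorphism class. So the paper has no argument for $I_{a,a+1}\le a\,I_{a,a}$ you could borrow. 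This inequality is not a side issue: it is the lemma itself when $b=a+1$, and every larger case is built on it.

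Your side-swap case is fine, but the other case is where all the difficulty lies, and neither sketch handles it. If $\mathrm{Aut}(T')$ is trivial (an extreme instance of your second kind), the $2a$ rooted trees $(T',u)$ are pairwise non-isomorphic. So per class the attachment count is $2a$, not $a$, and any saving has to be global. The fact that every $T$ has at least two $B$-leaves does not provide it. Those leaves can form a single $\mathrm{Aut}(T)$-orbit (e.g.\ the spider with centre in $B$ and $a$ legs of length $2$), and then $T$ arises from only one pair (class of $T'$, orbit of $u$).

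Precisely, such pairs are in bijection with pairs ($[T]$, orbit of $B$-leaves of $T$). Letting $[T']$ and $[T]$ range over the isomorphism classes for $K_{a,a}$ and $K_{a,a+1}$, this gives $I_{a,a+1}=\sum_{[T']}\mathrm{orb}(T')-\sum_{[T]}(\ell(T)-1)$. Here $\mathrm{orb}(T')$ is the number of vertex orbits of $\mathrm{Aut}(T')$ and $\ell(T)$ the number of $\mathrm{Aut}(T)$-orbits of $B$-leaves. What you must show is $\sum_{[T']}(\mathrm{orb}(T')-a)\le\sum_{[T]}(\ell(T)-1)$. Neither the ``maximal-degree neighbour'' rule nor the degree-pair fallback is developed far enough to prove anything like this.

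As it stands your argument yields only $I_{a,a+1}\le 2a\,I_{a,a}$, hence $I_{a,b}\le 2\,I_{a,a}\,a^{b-a}$. Small cases agree with the lemma, e.g.\ $I_{3,4}=7\le 9$ and $I_{4,5}=28\le 36$. So what is missing is an argument for the base step, not a counterexample.
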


\begin{proof}
    We proceed by induction on $n=a+b$.  The smallest allowable $n$ is $n=5=2+3$.  In this case, $G=K_{2,3}$.  As shown below, $I_{2,2}=1$ and $I_{2,3}=2$. Thus, $2= I_{2,3} \leq I_{2,2} \cdot 2^{3-2} = 1 \cdot 2 = 2$.

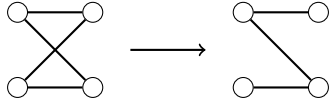
\begin{figure}[h]
    \centering
    
\begin{tikzpicture}[
           vs/.style={circle,draw=black,text=black},
           es/.style={thick, black},edge_dash/.style={ultra thick, black,dashed},scale=.5]
                      
            \node[vs,scale=.8,fill=white] (1) at (0,0){};
            \node[vs,scale=.8,fill=white] (2) at (0,2){};
            \node[vs,scale=.8,fill=white] (3) at (2,0){};
            \node[vs,scale=.8,fill=white] (4) at (2,2){};
            \node[vs,scale=.8,fill=white] (5) at (6,0){}; 
            \node[vs,scale=.8,fill=white] (6) at (6,2){}; 
            \node[vs,scale=.8,fill=white] (7) at (8,0){}; 
            \node[vs,scale=.8,fill=white] (8) at (8,2){}; 
             
            \draw[es] (1) -- (3);
            \draw[es] (2) --(4);
            \draw[es](2)-- (3);
            \draw[es](1)-- (4);
            \draw[es](5)-- (7);
            \draw[es](6)-- (8);
            \draw[es](6)-- (7);
            \draw[es,->]        (3,1)   -- (5,1);
        \end{tikzpicture}   
    
    \caption{$K_{2,2}$ and its only isomorphism class of spanning trees.}
    \label{}
\end{figure}

\begin{figure}[h]
    \centering
    
\begin{tikzpicture}[
           vs/.style={circle,draw=black,text=black},
           es/.style={thick, black},edge_dash/.style={ultra thick, black,dashed},scale=.5]
                      
            \node[vs,scale=.8,fill=white] (1) at (0,2){};
            \node[vs,scale=.8,fill=white] (2) at (0,4){};
             \node[vs,scale=.8,fill=white] (3) at (2,1){};
             \node[vs,scale=.8,fill=white] (4) at (2,3){};
             \node[vs,scale=.8,fill=white] (5) at (2,5){};

             \node[vs,scale=.8,fill=white] (6) at (6,10){};
             \node[vs,scale=.8,fill=white] (7) at (6,12){};
             \node[vs,scale=.8,fill=white] (8) at (8,9){};
             \node[vs,scale=.8,fill=white] (9) at (8,11){};
             \node[vs,scale=.8,fill=white] (10) at (8,13){};

             \node[vs,scale=.8,fill=white] (11) at (12,2){};
             \node[vs,scale=.8,fill=white] (12) at (12,4){};
             \node[vs,scale=.8,fill=white] (13) at (14,1){};
             \node[vs,scale=.8,fill=white] (14) at (14,3){};
             \node[vs,scale=.8,fill=white] (15) at (14,5){};
             
            \draw[es] (1) -- (3);
            \draw[es] (1) -- (4);
            \draw[es] (2) -- (4);
            \draw[es] (2) -- (5);

            \draw[es] (6) -- (8);
            \draw[es] (6) -- (9);
            \draw[es] (6) -- (10);
            \draw[es] (7) -- (8);
            \draw[es] (7) -- (9);
            \draw[es] (7) -- (10);

            \draw[es] (11) -- (13);
            \draw[es] (11) -- (14);
            \draw[es] (11) -- (15);
            \draw[es] (12) -- (15);
            
            \draw[es,->] (5,8)   -- (3,6);
            \draw[es,->] (9,8)   -- (11,6);
        \end{tikzpicture}   
    
    \caption{$K_{2,3}$ and its $2$ isomorphism classes of spanning trees.}
    \label{}
\end{figure}
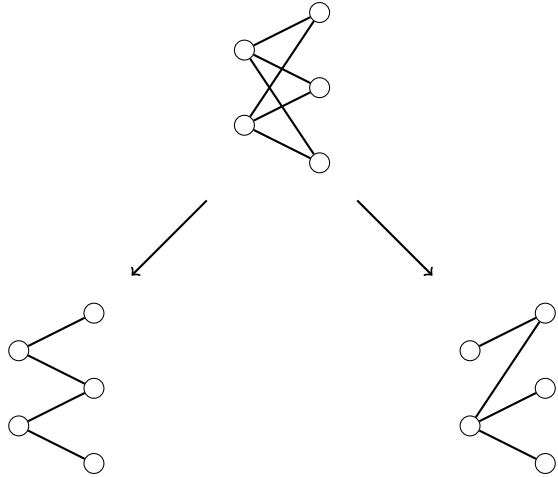

Suppose that $n=a+b>5$ and that $G$ is the complete bipartite graph of order $n$ with bipartitions $A$ and $B$ where $|A|=a$ and $|B|=b$. Let $G'$ be the complete bipartite graph with bipartition $A$, $B'$ where $|A|=a$ and $|B'|=b-1$. By the induction hypothesis, $I_{a,b-1} \leq I_{a,a}\cdot a^{b-1-a}$.  Suppose that $T'$ is a spanning tree of $G'$.  Let $v \not\in A \cup B'$ and $B=B' \cup \{v\}$.  Since $N(v)=A$ and $T'$ spans $G'$, for any $u \in A$, $T=T' \cup uv$ is a spanning tree of $G$.  Then, $I_{a,b} \leq I_{a,a} \cdot a^{b-1-a} \cdot a = I_{a,a} \cdot a^{b-a}$ as $|N(v)|=|A|=a$.
    
\end{proof}

\begin{theorem}
    If $2 \leq a < b$, $I_{a,b} \leq a^{a+b-2}$. 
\end{theorem}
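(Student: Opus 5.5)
The natural route is to combine Lemma 2.5 with a bound on $I_{a,a}$. By Lemma 2.5, for $2 \leq a < b$ we have $I_{a,b} \leq I_{a,a}\cdot a^{b-a}$. It therefore suffices to show $I_{a,a} \leq a^{2a-2}$, because then
\[
I_{a,b} \leq a^{2a-2}\cdot a^{b-a} = a^{a+b-2},
\]
which is exactly the claimed bound.

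For $I_{a,a}$ we use Scoins' formula. Every isomorphism class of spanning trees of $K_{a,a}$ contains at least one labeled spanning tree of $K_{a,a}$, so the number of classes is at most the number of labeled spanning trees:
\[
I_{a,a} \leq \tau(K_{a,a}) = a^{a-1}\cdot a^{a-1} = a^{2a-2}.
\]
Substituting this into Lemma 2.5 finishes the argument.

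The only point needing care is the justification of Lemma 2.5 itself, which we are permitted to assume. Its content is that every spanning tree of $K_{a,b}$ with $b>a$ has a leaf in $B$, since $t_b=1$ as shown in the proof of Lemma 2.1. Deleting that leaf leaves a spanning tree of $K_{a,b-1}$, and reattaching a leaf to one of the $a$ vertices of $A$ gives at most $a$ choices per isomorphism class. Since that lemma is already established, no step here is a real obstacle. The main thing to check is that the inequality $I_{a,a}\le \tau(K_{a,a})$ is legitimate, and it is, because isomorphism classes are images of labeled trees under a surjection.

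As an alternative not using Lemma 2.5, one could note directly that $I_{a,b} \leq \tau(K_{a,b}) = a^{b-1}b^{a-1}$. However, $b^{a-1} \geq a^{a-1}$, so this gives only a weaker bound. This is why the route through $K_{a,a}$ is preferred.
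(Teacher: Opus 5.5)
Your proof is correct and is the paper's argument: Lemma 2.5 gives $I_{a,b}\le I_{a,a}\cdot a^{b-a}$, and Scoins' formula bounds $I_{a,a}$ by the number of labeled spanning trees $\tau(K_{a,a})=a^{2a-2}$. Your sketch of Lemma 2.5 is not needed, but it has a gap when $b-1=a$: an isomorphism from $T-v$ to the chosen representative of its class can swap the two sides, so the reattachment point need not lie in $A$, and the sketch as written gives up to $2a$ choices per class rather than $a$.
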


\begin{proof}
    By Lemma 2.5, $I_{a,b} \leq I_{a,a} \cdot a^{b-a}$. Since $I_{a,a} \leq a^{2a-2}$ by Scoins' formula, $I_{a,b} \leq I_{a,a} \cdot a^{b-a} \leq a^{2a-2}\cdot a^{b-a} = a^{a+b-2}$.
\end{proof}

Note that Theorem 2.6 gives a nontrivial bound when $2\leq a <b$.  Scoins' formula gives an obvious upper bound of $a^{b-1} \cdot b^{a-1}$ as it enumerates the number of labeled spanning trees of $K_{a,b}$.  However, when $2 \leq a < b$, $a^{a+b-2}=a^{a-1}\cdot a^{b-1} < a^{b-1} \cdot b^{a-1}$.

\printbibliography

@article{Scoins_1962, title={The number of trees with nodes of alternate parity}, volume={58}, number={1}, journal={Mathematical Proceedings of the Cambridge Philosophical Society}, author={Scoins, H. I.}, year={1962}, pages={12–16}}

@article{Glicksman, title={On the representation and enumeration of trees}, volume={59}, number={3}, journal={Mathematical Proceedings of the Cambridge Philosophical Society}, author={Glicksman, Stephen}, year={1963}, pages={509–517}}

@article{KirchhoffUeberDA,
  title={Ueber die Aufl{\"o}sung der Gleichungen, auf welche man bei der Untersuchung der linearen Vertheilung galvanischer Str{\"o}me gef{\"u}hrt wird},
  author={Gustav R. Kirchhoff},
  journal={Annalen der Physik}, year={1847},
  volume={148},
  pages={497-508},
}

@article{borchardt1861,
  author = {Borchardt, Carl Wilhelm},
  title = {Über eine Interpolationsformel für eine Art symmetrischer Functionen und über deren Anwendung},
  journal = {Monatsberichte der Königlich Preußischen Akademie der Wissenschaften zu Berlin},
  year = {1861},
  pages = {257-266},
}

\end{document}